\newcommand{\Hess}{{\rm Hess}}
\newcommand{\Ric}{{\rm Ric}}
\newtheorem{lemma}{Lemma}[section]
\newtheorem{theorem}{Theorem}[section]
\newtheorem{proposition}[theorem]{Proposition}
\theoremstyle{definition}
\newtheorem{definition}{Definition}[section]
\newtheorem{corollary}[theorem]{Corollary}
\newtheorem{remark}{Remark}
\newcommand{\comment}[1]{}
\newcommand{\R}{\mathbb{R}}
\numberwithin{equation}{section}
\begin{document}
\title{Equivalent Properties of CD Inequality on Graph}
\author{Yong Lin\footnotemark[1], Shuang Liu}
\date{}
\maketitle

\renewcommand{\thefootnote}{\fnsymbol{footnote}}
\footnotetext[1]{Supported by the National Natural Science Foundation of China(GrantNo.$11271011$), and supported by the Fundamental Research Funds for the Central Universities and the Research Funds of Renmin University of China($11$XNI$004$).}

\begin{center}
\textbf{Abstract}
\end{center}
We study some equivalent properties of the curvature-dimension conditions $CD(n,K)$ inequality on infinite, but locally finite graph. These equivalences are gradient estimate, Poincar\'{e} type inequalities and reverse Poincar\'{e} inequalities. And we also obtain one equivalent property of gradient estimate for a new notion of curvature-dimension conditions $CDE'(\infty, K)$ at the same assumption of graphs.\\

\textit{Keywords:} Heat kernel, semigroup, $CD(n,K)$, $CDE'$ inequality.

\section{Introduction}
\subsection{Preliminaries}
Let $G=(V,E)$ be a finite or infinite graph. We allow the edges on the graph to be weighted, we consider a symmetric weight function $\omega: V\times V\rightarrow [0,\infty)$, the edge $xy$ from $x$ to $y$ has weight $\omega_{xy}>0$. In this paper, we assume this weight function is symmetric($\omega_{xy}=\omega_{yx}$). Moreover we assume the graph is connected, which implies the weight function satisfies
$$\omega_{\min}=\inf_{x,y \in V}\omega_{xy}>0.$$
And the graph we are interested is locally finite,
$$m(x):=\sum_{y\sim x}\omega_{xy}<\infty, \quad \forall x\in V.$$

Given a positive and finite measure $\mu:V \rightarrow \mathbb{R^{+}}$ on graph. We denote by $V^{\mathbb{R}}$ the space of real functions on $V$. Let $\ell^{p}(V,\mu)=\{f \in V^{\mathbb{R}}:\sum_{x\in V}\mu(x)|f(x)|^{p}<\infty\}, 1\leq p< \infty$, be
the space of $\ell^{p}$ integrable functions on $V$ with respect to the measure $\mu$. If for any $f,g \in \ell^{2}(V,\mu)$, let the inner product as $\langle f,g\rangle=\sum_{x\in V}\mu(x)f(x)g(x)$, then the space of $\ell^{2}(V,\mu)$ is a Hilbert space. For $p=\infty$, let $\ell^{\infty}(V,\mu)=\{f \in V^{\mathbb{R}}:\sup_{x\in V}|f(x)|<\infty\}$ be the set of bounded functions. For every function $f\in \ell^{p}(V,\mu),1\leq p\leq \infty$, defining the norm by
\[\|f\|_p=\left(\sum_{x\in V}\mu(x)|f(x)|^p\right)^{\frac{1}{p}}, 1\leq p<\infty ~\mbox{and}~ \|f\|_{\infty}=\sup_{x\in V}|f(x)|.\]

The $\mu-$Laplacian $\Delta:V^{\mathbb{R}}\rightarrow V^{\mathbb{R}}$ on $G$ is the linear operator, defined by, for any $x\in V$,
$$\Delta f(x)=\frac{1}{\mu(x)}\sum_{y\sim x} \omega_{xy}(f(y)-f(x)).$$
It will be useful to introduce an abbreviated notation for "averaged sum",
$$\widetilde{\sum_{y\sim x}}h(y)=\frac{1}{\mu(x)}\sum_{y\sim x} \omega_{xy}h(y) \quad \forall x\in V.$$
If $f\in \ell^{\infty}(V,\mu)$, under the assumption of locally finite, it is known immediately that for any $x\in V$, $\Delta f(x)$ is the sum of finite terms. The two most natural choices are the case where $\mu(x)=m(x)$ for all $x\in V$, which is the normalized graph Laplacian, and the case $\mu$ $\equiv 1$ which is the standard graph Laplacian. Furthermore, in this paper we assume
$$D_{\mu}:=\max_{X \in V}\frac{m(x)}{\mu(x)}<\infty.$$

\subsection{Curvature-dimension inequalities}

In this subsection we introduce the notion of the curvature-dimension inequalities on graphs. A graph is a metric space with a proper distance, such as its natural graph distance. Metric spaces play a important role in many fields of mathematics. If admitting all kinds of singularities, metric spaces constitute natural generalizations of manifolds, and they provide rich geometric structures too. We regard graphs as discretizing Riemannian manifolds. For many fundamental results in geometric analysis on metric space, the crucial ingredients are bounds for the Ricci curvature of the underlying manifolds as pinoeered by Barkry and Emery [BE].

On a Riemannian manifold, Bochner's identity is given by
$$\frac{1}{2}\Delta |\nabla f|^2 = \langle \nabla f, \nabla \Delta f \rangle + \|\Hess f\|_2^2 +  \Ric(\nabla f, \nabla f),$$
which is establishes an important connection between Ricci curvature and analytic properties of a manifold. An immediate consequence of the Bochner's identity is that on an $n$-dimensional manifold whose Ricci curvature is bounded from below by $K$ one has
\begin{equation*}
\frac{1}{2}\Delta |\nabla f|^2 \geq \langle \nabla f, \nabla \Delta f \rangle + \frac{1}{n}(\Delta f)^2 + K |\nabla f|^2,
\end{equation*}
which is called the curvature-dimension inequality. It can be used as a substitute for the lower Ricci curvature bound on metric spaces by Bakry and Emery from [BE].

Bakry and Ledoux [BL] even manage to generalize the above curvature-dimension inequality to Markov operators on general measure spaces when the operator satisfies a chain rule type formula. Unfortunately such a formula cannot hold in a discrete setting. In fact, only the function $u^2$ and $u^\frac{1}{2}$ satisfy the chain rule from [LY$10$] and [BHL+] on graphs, which is probably one of the most crucial observation to start researches in this field.

First we need to recall the definition of two bilinear forms associated to the $\mu-$Laplacian. These notations are from the paper [LY10] and [BHL+].
\begin{definition}
The gradient form $\Gamma$ is defined by
\begin{equation}
\begin{split}
2\Gamma(f,g)(x)& =(\Delta(f\cdot g)-f\cdot \Delta(g)-\Delta(f)\cdot g)(x)\\
               & =\frac{1}{\mu(x)}\sum_{y\sim x}\omega_{xy}(f(y)-f(x))(g(y)-g(x)).
\end{split}
\notag
\end{equation}
We write $\Gamma(f)=\Gamma(f,f)$.
\end{definition}
Similarly,
\begin{definition}
The iterated gradient form $\Gamma_{2}$ is defined by
$$2\Gamma_{2}(f,g) = \Delta\Gamma(f,g)-\Gamma(f,\Delta g)-\Gamma(\Delta f,g).$$
We write $\Gamma_{2}(f)=\Gamma_{2}(f,f)$.
\end{definition}
\begin{definition}
The graph $G$ satisfies the CD inequality $CD(n,K)$ if, for any function $f$
$$\Gamma_{2}(f)\geq \frac{1}{n}(\Delta f)^{2}+K\Gamma(f).$$
\end{definition}

\begin{definition}
We say that a graph $G$ satisfies the $CDE'(x,n,K)$, if for any positive function $f : V\to \R^+$, we have
$$\widetilde{\Gamma_2}(f)(x) \geq \frac{1}{n} f(x)^2\left(\Delta \log f\right)(x)^2 + K \Gamma(f)(x).$$
We say that $CDE'(n,K)$ is satisfied if $CDE'(x,n,K)$ is satisfied for all $x \in V$.
\end{definition}

\section{Heat semigroup on graph}
In this section we are interested in the positive solution of the heat equation
$$\Delta u=\partial_{t}u$$
on graphs. And we focus on the heat kernel $p_{t}(x,y)$, a fundamental solution of the heat equation defined by, if for any bounded initial condition $u_{0}:V\rightarrow \mathbb{R}$, the function
$$u(t,x)=\sum_{y\in V} \mu(y)p_{t}(x,y)u_{0}(y)\quad t>0,x\in V$$
is differentiable in $t$, satisfies the heat equation, and if for any $x\in V$,
$\lim_{t\rightarrow 0^{+}}u(t,x)=u_{0}(x)$
holds.

Follow the paper [WR], we   definite the heat kernel for the above $\Delta$ we introduced on the infinite but locally finite graph  $G=(V,E)$. For any subset $U \subset V$ denotes always a finite subset, we denote by $\overset{\circ}{U}=\{x\in U:y\sim x,y\in U\}$ the interior of $U$. The boundary of $U$ is $\partial U = U\setminus \overset{\circ}{U}$. We consider the Dirichlet problem (DP),
\[\left\{
  \begin{array}{ll}
   \partial_{t}u(t,x)-\Delta_{U}u(t,x)=0, & \hbox{$x\in \overset{\circ}{U},t>0$,} \\
    u(0,x)=u_{0}(x), & \hbox{ $x\in \overset{\circ}{U}$,} \\
    u\mid_{[0,\infty)\times \partial U}=0.
  \end{array}
\right.\]
where $\Delta_{U}: \ell ^{2} (\overset{\circ}{U},\mu)\rightarrow \ell ^{2}(\overset{\circ}{U},\mu)$ denotes the Dirichlet Laplacian on $\overset{\circ}{U}$.

The operator $\Delta_{U}$ is a generator of the heat semigroup $P_{t,U}=e^{t\Delta_{U}}$,$t>0$. According to spectral graph theory, we can find the easy knowing, $e^{t\Delta_{U}}\phi_{i}=e^{-t\lambda_{i}}\phi_{i}$. We can define the heat kernel $p_{U}(t,x,y)$ for the finite subset $U$ by
$$p_{U}(t,x,y)=P_{t,U}\delta_{y}(x),\quad \forall x,y\in \overset{\circ}{U}$$
where $\delta_{y}(x)=\sum_{i=1}^{n}\langle\Phi_{i},\delta_{y}\rangle\Phi_{i}(x)=\sum_{i=1}^{n}\Phi_{i}(x)\Phi_{i}(y)$.
It is easy to know the heat kernel satisfies
$$ p_{U}(t,x,y)=\sum_{i=1}^{n}e^{-\lambda_{i}t}\phi_{i}(x)\phi_{i}(y),\quad \forall x,y\in \overset{\circ}{U}.$$

Let $U\subset V$, $k \in \mathbb{N}$ be a sequence of finite subsets with $U_{k}\subset \overset{\circ}{U}_{k+1}$ and $\cup_{k \in \mathbb{N}}U_{k}=V$. Such a sequence always exists and can be constructed as a sequence $U_{k} = B_{k}(x_{0})$ of metric balls with center $x_{0}\in V$ and radius $k$. The connectedness of our graph $G$ implies that the union of these $U_k$ equals $V$. In the following, we will write $p_{k}$ for the heat kernel $p_{U_{k}}$ on $U_{k}$, and define $p_{k}(t,x,y)$ as a function on $(0,\infty)\times V\times V$ by,
\[p_{k}(t,x,y)=
\left\{
  \begin{array}{ll}
     p_{U_{k}}(t,x,y), & \hbox{$x,y\in \overset{\circ}{U_{k}}$;} \\
     0, & \hbox{o.w.}
  \end{array}
\right.
\]
For any $t>0,x,y\in V,$ we let
$$p_{t}(x,y)=\lim_{k\rightarrow \infty}p_{k}(t,x,y),$$
$p_{t}(x,y)$ is the heat kernel what we want and does not depend on the choice of the exhaustion sequence $U_{k}$. For completeness, we conclude all properties we will use in this paper of the heat kernel $p_t(x,y)$ as follows.
\begin{remark}
For $t,s>0$, $\forall x,y\in V$, we have
\begin{enumerate}
  \item $p_t(x,y)=p_t(y,x)$
  \item $p_t(x,y)\geq 0$,
  \item $\sum_{y\in V}\mu(y)p_t(x,y) \leq 1$,
  \item $\lim_{t\rightarrow 0^{+}}\sum_{y\in V}\mu(y)p_t(x,y) = 1$,
  \item $\partial_{t}p_t(x,y)=\Delta_yp_t(x,y)=\Delta_xp_t(x,y)$
  \item $\sum_{z\in V}\mu(z)p_t(x,z)p_s(z,y)=p_{t+s}(x,y)$
\end{enumerate}
\end{remark}
So far now, we can obtain some properties of the operator $P_{t}$ defined by, for any bounded function $f\in \ell^{\infty}(V,\mu)$,
$$P_{t}f(x)=\lim_{k\rightarrow \infty}\sum_{y\in V}\mu(y)p_{k}(t,x,y)f(y)=\sum_{y\in V}\mu(y)p_{t}(x,y)f(y).$$
\begin{proposition}
For any bounded function $f,g\in\ell^{\infty}(V,\mu)$, and $t,s>0$, for any $x\in V$,
\begin{enumerate}
  \item  $P_{t}$ is a bounded operator and a contraction,
  \item $P_{t}\circ P_{s}f(x)=P_{t+s}f(x)$,
  \item $\Delta P_{t}f(x)=P_{t} \Delta f(x)$.
\end{enumerate}
\end{proposition}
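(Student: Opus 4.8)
The plan is to derive all three statements from the kernel properties collected in the Remark, together with the symmetry of $\omega$ and the standing assumption $D_\mu<\infty$.

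For part (1), I would estimate directly: for every $x\in V$,
$$|P_t f(x)|\le\sum_{y\in V}\mu(y)\,p_t(x,y)\,|f(y)|\le\|f\|_\infty\sum_{y\in V}\mu(y)\,p_t(x,y)\le\|f\|_\infty,$$
using nonnegativity of the kernel (property 2) and the sub-Markov bound (property 3). Taking the supremum over $x$ gives $\|P_t f\|_\infty\le\|f\|_\infty$, which yields both boundedness and the contraction property at once.

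For part (2), I would expand the composition and rearrange the order of summation,
$$P_t(P_s f)(x)=\sum_{z\in V}\mu(z)p_t(x,z)\sum_{y\in V}\mu(y)p_s(z,y)f(y)=\sum_{y\in V}\mu(y)f(y)\sum_{z\in V}\mu(z)p_t(x,z)p_s(z,y).$$
Because $f$ is bounded and both kernels are nonnegative with summable $\mu$-mass, the double series converges absolutely, so Fubini justifies the interchange. The inner sum is precisely $p_{t+s}(x,y)$ by the semigroup identity (property 6), and the expression collapses to $P_{t+s}f(x)$.

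For part (3), I would first move $\Delta_x$ inside the $y$-sum; this is legitimate because at a fixed $x$ the Laplacian involves only the finitely many neighbours of $x$, so that
$$\Delta_x P_t f(x)=\sum_{y\in V}\mu(y)\,\big(\Delta_x p_t(x,y)\big)\,f(y)=\sum_{y\in V}\mu(y)\,\big(\Delta_y p_t(x,y)\big)\,f(y),$$
where the second equality is property 5. It then remains to transfer the Laplacian from the kernel onto $f$ via a discrete Green's identity, i.e. to prove $\sum_{y}\mu(y)(\Delta_y p_t(x,y))f(y)=\sum_{y}\mu(y)p_t(x,y)(\Delta f)(y)$. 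This summation by parts is where the real work lies and is the main obstacle: on an infinite graph one must justify rearranging a sum over all oriented edges before invoking $\omega_{vw}=\omega_{wv}$. I expect to secure it from $D_\mu<\infty$, which makes $\Delta f$ bounded whenever $f$ is and, writing $g(v)=p_t(x,v)$, furnishes the absolute convergence
$$\sum_{v\in V}\sum_{w\sim v}\omega_{vw}\big(g(w)+g(v)\big)\le 2D_\mu\sum_{v\in V}\mu(v)\,g(v)\le 2D_\mu<\infty$$
that legitimizes every rearrangement and closes the argument.
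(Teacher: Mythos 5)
Your proposal is correct and follows essentially the same route as the paper: part (1) from kernel nonnegativity and the sub-Markov bound, part (2) from the Chapman--Kolmogorov identity with a Fubini interchange, and part (3) by converting $\Delta_x$ to $\Delta_y$ via property 5 and then summing by parts over edges, with absolute convergence secured by $D_\mu<\infty$ and $\sum_y\mu(y)p_t(x,y)\le 1$. The only cosmetic difference is in part (2), where the paper performs the interchange at the level of the exhaustion kernels $p_k$ and passes to the limit, whereas you invoke the semigroup identity for $p_t$ directly from the Remark; both are legitimate.
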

\begin{proof}
The first one immediately comes from the definition of $P_{t}f$ and item 3 in Remark 1.

For any bounded function $f\in \ell^{\infty}(V,\mu)$, and any $x\in V$,
notice $\lim_{k\rightarrow \infty} p_{k}(t,x,y)$ does not depend on the choice of the exhaustion sequence $U_{k}$, so
\[
\begin{split}
P_{t}\circ P_{s}f(x)&=\lim_{k\rightarrow \infty}\sum_{y\in V}\mu(y)p_{k}(t,x,y)\sum_{z\in V}\mu(z)p_{k}(s,y,z)f(z)\\
                &=\lim_{k\rightarrow \infty}\sum_{z\in V}\mu(z)\left(\sum_{y\in V}\mu(y)p_{k}(t,x,y)p_{k}(s,y,z)\right)f(z)\\
                &=\lim_{k\rightarrow \infty}\sum_{z\in V}\mu(z)p_{k}(t+s,x,z)f(z)\\
                &=P_{t+s}f(x).
\end{split}
\]

Notice the function $f$ is bounded,  there exists a constant $C>0$, such that for any $x\in V$, $\sup_{x\in V}|f(x)|\leq C$, we have
\[\sum_{y\in V}\sum_{z\sim y}|\omega_{yz} p_{t}(x,y)f(z)|\leq D_{\mu}C\sum_{y\in V}\mu(y)p_{t}(x,y)\leq D_{\mu}C<\infty,\]
and
\[\sum_{y\in V}\sum_{z\sim y}|-\omega_{yz} p_{t}(x,y)f(y)|<\infty.\]
Then,
\[
\begin{split}
\Delta P_{t}f(x)&=\Delta_{x} \left(\sum_{y\in V}\mu(y)p_{t}(x,y)f(y)\right)\\
                &=\sum_{y\in V}\mu(y)\Delta_{y} p_{t}(x,y)f(y)\\
                &=\sum_{y\in V}\sum_{z\sim y}\omega_{yz} (p_{t}(x,z)-p_{t}(x,y))f(y)\\
                &=\sum_{y\in V}\sum_{z\sim y}\omega_{yz} p_{t}(x,z)f(y)-\sum_{y\in V}\sum_{z\sim y}\omega_{yz} p_{t}(x,y)f(y)\\
                &=\sum_{y\in V}\sum_{z\sim y}\omega_{yz} p_{t}(x,z)f(y)-\sum_{y\in V}\sum_{z\sim y}\omega_{yz} p_{t}(x,z)f(z)\\
                &=\sum_{y\in V}\sum_{z\sim y}\omega_{yz} p_{t}(x,z)(f(y)-f(z))\\
                &=P_{t}\Delta f(x).
\end{split}
\]
This ends the proof of Proposition 3.1.
\end{proof}
We need to clear up that the operator $P_t$ above is the heat kernel of the heat semigroup $e^{t\Delta}$.
\begin{proposition}
For any bounded function $f,g\in\ell^{\infty}(V,\mu)$, and $t>0$, for any $x\in V$,
\[P_t f(x)=e^{t\Delta}f(x).\]
\begin{proof}
Consider the function
\[v(t,x)=P_tf(x)-e^{t\Delta}f(x),\]
we are going to show that $v=0$.
\[\begin{split}
\sum_{x\in V}\mu(x)v^2(t,x)
&=\sum_{x\in V}\mu(x)\int_0^t\partial_\tau v^2(\tau,x)d\tau\\
&=2\sum_{x\in V}\mu(x)\int_0^t v(\tau,x)\Delta v(\tau,x)d\tau\\
&=2\int_0^t\sum_{x\in V}\mu(x) v(\tau,x)\Delta v(\tau,x)d\tau\\
&=-2\int_0^t\sum_{x\in V}\mu(x) \Gamma(v(\tau,\cdot))(x)d\tau\leq 0,
\end{split}
\]
it follows $v=0$. The interchange of summation and integration in the calculation is justified by Tonelli's Theorem as the iterated integral are finite since $P_t$ and  $e^{t\Delta}$ are contractions.
\end{proof}
\end{proposition}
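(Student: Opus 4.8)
The plan is to treat this as a uniqueness statement for the discrete heat flow and settle it by an energy estimate on the difference of the two evolutions. Following the author's natural choice, set $v(t,x) = P_t f(x) - e^{t\Delta} f(x)$ and aim to prove $v \equiv 0$. The argument rests on two facts: both $u_1(t,x)=P_tf(x)$ and $u_2(t,x)=e^{t\Delta}f(x)$ solve the heat equation $\partial_t u = \Delta u$, and both carry the initial datum $f$. For $e^{t\Delta}$ these hold by definition of the semigroup generated by $\Delta$. For $P_t$, differentiating the kernel representation and using item (5) of Remark 1 ($\partial_t p_t(x,y)=\Delta_x p_t(x,y)$) together with item (3) of Proposition 3.1 gives $\partial_t P_t f = \Delta P_t f$, while items (2)--(4) of Remark 1 (nonnegativity, the sub-Markov property, and $\lim_{t\to 0^+}\sum_y \mu(y)p_t(x,y)=1$) give $\lim_{t\to 0^+}P_t f(x)=f(x)$. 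Consequently $v$ solves the heat equation with $v(0,\cdot)\equiv 0$.

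I would then monitor the energy $E(t)=\sum_{x\in V}\mu(x)v(t,x)^2$. Since $v(0,\cdot)\equiv 0$, the fundamental theorem of calculus gives $E(t)=\sum_x \mu(x)\int_0^t \partial_\tau\big(v(\tau,x)^2\big)\,d\tau$, and $\partial_\tau(v^2)=2v\,\partial_\tau v=2v\,\Delta v$ because $v$ solves the heat equation. The summation-by-parts identity for $\Delta$, which is a direct consequence of the symmetry $\omega_{xy}=\omega_{yx}$ and the definition of $\Gamma$, reads $\sum_x \mu(x)v(x)\Delta v(x)=-\sum_x \mu(x)\Gamma(v)(x)$. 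Interchanging the sum with the integral and applying this identity yields $E(t)=-2\int_0^t \sum_x \mu(x)\Gamma\big(v(\tau,\cdot)\big)(x)\,d\tau\le 0$, since $\Gamma(v)\ge 0$ pointwise. As $E(t)\ge 0$ by construction, this forces $E\equiv 0$, hence $v\equiv 0$, which is the assertion.

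The step I expect to be the real obstacle is the justification of the interchanges of limiting operations forced by working on an infinite graph: differentiating $P_tf(x)=\sum_y\mu(y)p_t(x,y)f(y)$ term by term in $t$, passing the $t\to 0^+$ limit inside the sum for the initial condition, and swapping $\sum_x$ with $\int_0^t$ in the energy identity. The natural device is a dominated-convergence / Fubini--Tonelli argument anchored to the contraction bound in item (3) of Remark 1: with $|f|\le C$ and $D_\mu<\infty$, the relevant double series are dominated by $C D_\mu \sum_y \mu(y)p_t(x,y)\le C D_\mu$, exactly as in the proof of item (3) of Proposition 3.1, so the manipulations are legitimate. A companion point demanding care is that $v(t,\cdot)$ genuinely lie in $\ell^2(V,\mu)$, so that $E(t)$ is finite and the summation-by-parts identity is valid; this is immediate when $\mu$ has finite total mass (since $v$ is bounded), and otherwise must be extracted from the $\ell^2$-mapping and integrability properties of the heat semigroup.
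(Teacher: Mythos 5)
Your proposal is correct and follows essentially the same route as the paper: the same energy function $E(t)=\sum_x\mu(x)v(t,x)^2$, the same use of the heat equation and summation by parts to get $E(t)=-2\int_0^t\sum_x\mu(x)\Gamma(v(\tau,\cdot))(x)\,d\tau\le 0$, and the same Tonelli/contraction justification for the interchange. Your added remarks on verifying the initial condition and on $v(t,\cdot)\in\ell^2(V,\mu)$ are, if anything, more careful than the paper's own treatment.
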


\section{Main results}
In many papers, such as [BL] and [W], they have proved equivalent semigroup properties for curvature-dimension condition on Riemannian manifold, if assuming the semigroup be a diffusion semigroup, that is, the operator $\Delta$ satisfies the chain role. But as what this paper says before, it doesn't hold in discrete setting. In this section, we will introduce the similar results on graphs at the assumption of $CD(n,K)$. Moreover, we obtain one of the equivalent semigroup properties, the gradient bound, if the graph satisfies the condition $CDE'(\infty,K)$.

For a connected and locally finite graph $G=(V,E)$, for any $0\leq s < t, x\in V$, any positive and bounded function $0<f\in \ell^{\infty}(V,\mu)$, we know $P_{t-s} f$, $\Gamma(P_{t-s} f)$ and $\Gamma(\sqrt{P_{t-s} f})$ are all bounded and their boundaries are not related with $s$, because of the boundness of the operators $P_t$ and $\Gamma$.

We also need the following Lemma from [HLLY] to clarify that
$\Gamma(\sqrt{f},\frac{\Delta P_{t}f}{2\sqrt{P_{t}f}})$ is bounded if $f$ is bounded.
\begin{lemma}
For any positive and bounded solution $0<u\in \ell^{\infty}(V,\mu)$ to the heat equation on $G$, if the graph satisfies the condition $CDE'(n,K)$, then the function
$\frac{\Delta u}{2\sqrt{u}}$ on $G$ is bounded.
\end{lemma}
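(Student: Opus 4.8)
The plan is to reduce the statement to a one-sided gradient bound and then to extract that bound from the hypothesis $CDE'(n,K)$ by a parabolic maximum-principle argument on the exhaustion $\{U_k\}$ used to construct the heat kernel.

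First I would peel off the harmless part. Setting $v=\sqrt u$ and using the product identity built into $\Gamma$, namely $2\Gamma(v)=\Delta(v^2)-2v\,\Delta v$ with $v^2=u$, one gets the pointwise formula
\[
\frac{\Delta u}{2\sqrt u}=\Delta\sqrt u+\frac{\Gamma(\sqrt u)}{\sqrt u}.
\]
Since $0<u\le\|u\|_\infty=:M$, the function $\sqrt u$ is bounded, so $|\Delta\sqrt u(x)|\le\widetilde{\sum_{y\sim x}}\big(\sqrt{u(y)}+\sqrt{u(x)}\big)\le 2D_\mu\sqrt M$ directly from the definition of $\Delta$ and the standing assumption $D_\mu<\infty$; this term needs nothing further. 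The lemma therefore reduces to an upper bound on the manifestly nonnegative quantity $\Gamma(\sqrt u)/\sqrt u$, whose only possible failure mode is blow-up at vertices where $u(x)$ is small while a neighbour $u(y)$ stays bounded away from $0$. No estimate depending on $M$ and $D_\mu$ alone can exclude this — one sees it already on a single edge with $u(x)\to 0$ and $u(y)$ fixed — so the curvature hypothesis is genuinely needed: what is really being asserted is a local Harnack-type control forbidding tiny values next to large ones.

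For the main step I would use that $u$ solves the heat equation and run a maximum principle on the finite subgraphs $U_k$, on which all suprema are attained. Taking as auxiliary function the quantity $\Gamma(\sqrt u)/\sqrt u$ (or the closely related $u\,\Delta\log u$, which $CDE'(n,K)$ controls directly through the dimension term $\tfrac1n u^2(\Delta\log u)^2$), I would compute $(\partial_t-\Delta)$ of it, substitute $\partial_t u=\Delta u$, and reorganize using the definitions of $\Gamma_2$ and $\widetilde{\Gamma_2}$ so that the leading term is $\widetilde{\Gamma_2}$. Feeding in $CDE'(n,K)$ then replaces this leading term by the nonnegative curvature-dimension expression together with $K\Gamma(\sqrt u)$, producing a differential inequality of Li--Yau type in which the dimension term furnishes a quadratic negative feedback that dominates the remaining error terms (bounded by $M$ and $D_\mu$) once the auxiliary quantity is large. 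At an interior point of $[0,T]\times U_k$ where this quantity attains its maximum, the spatial Laplacian contributes with a favourable sign and the time derivative is nonnegative, so the quadratic feedback caps the maximum in terms of $M$, $n$, $K$ and $D_\mu$ only — crucially, independently of $k$ and of $T$. Letting $k\to\infty$ along the exhaustion, and using that the limit of $p_k$ is independent of the exhaustion (as established for the construction of $p_t$), transfers the bound to all of $V$.

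The main obstacle is precisely this second step, and within it the absence of a chain rule in the discrete setting: the clean Bochner-type cancellations available on manifolds do not occur, which is exactly why the estimate must be routed through $\widetilde{\Gamma_2}$ and the $CDE'$ formulation rather than the raw $CD(n,K)$ inequality, and why one must verify that every error term generated by discreteness is itself controlled by $M$ and $D_\mu$ before the maximum principle can close the argument. A secondary technical point, also requiring care, is the localisation: since $\Gamma(\sqrt u)/\sqrt u$ is merely bounded and does not decay at infinity, the maximum principle has to be applied on each $U_k$ with the Dirichlet condition on $\partial U_k$, and one must check that the resulting bound is uniform in $k$ (and in $t$) so that the passage to the limit is legitimate.
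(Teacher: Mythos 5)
First, a point of reference: the paper does not actually prove this statement --- it is imported verbatim from [HLLY] as a black box, so there is no in-paper proof to compare yours against. Judged on its own, your opening reduction is correct and worth keeping: the identity $\frac{\Delta u}{2\sqrt u}=\Delta\sqrt u+\frac{\Gamma(\sqrt u)}{\sqrt u}$ follows from $2\Gamma(v)=\Delta(v^2)-2v\Delta v$ with $v=\sqrt u$, the bound $|\Delta\sqrt u|\le 2D_\mu\sqrt{\|u\|_\infty}$ is immediate, and you correctly locate the entire difficulty in an upper bound for $\Gamma(\sqrt u)/\sqrt u$, equivalently in the Harnack-type statement $u(y)\lesssim\sqrt{u(x)}$ for $y\sim x$.

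The gap is in the main step, which is a sketch whose central claim does not survive inspection. You assert that $CDE'(n,K)$, fed into a $(\partial_t-\Delta)$ computation for $\Gamma(\sqrt u)/\sqrt u$, produces a ``quadratic negative feedback'' in that same quantity. But the dimension term of $CDE'$ is quadratic in $u\,\Delta\log\sqrt u$, not in $\Gamma(\sqrt u)/\sqrt u$; the only known bridge between them is the concavity inequality $\log s\le 2(\sqrt s-1)$, which yields $-u\,\Delta\log u\ge 2\Gamma(\sqrt u)-\Delta u$ and hence (after the sign analysis at the maximum point) controls the \emph{combination} $\frac{\Gamma(\sqrt u)}{u}-\frac{\partial_t\sqrt u}{\sqrt u}$. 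That combination, however, equals $-\frac{\Delta\sqrt u}{\sqrt u}$ identically, which is already trivially bounded by $D_\mu$ without any curvature assumption --- so the standard Li--Yau maximum-principle machinery, run as you describe, returns no information whatsoever about $\Gamma(\sqrt u)/\sqrt u$ alone. Closing the argument for the uncompensated (and non-scale-invariant) quantity $\Gamma(\sqrt u)/\sqrt u$ is precisely the content of the lemma, and your outline does not supply the mechanism. Two further points would also need repair: applying the maximum principle ``on $U_k$ with the Dirichlet condition on $\partial U_k$'' is not available for the global solution $u$, which does not vanish on $\partial U_k$ --- one must instead multiply by a cutoff and absorb the resulting error terms, which in the absence of a chain rule is where most of the work in [BHL+] lies; and the dependence of the final constant on $\|u\|_\infty$ (which you correctly note is unavoidable) must enter the computation somewhere, whereas the differential inequality you describe is scale-invariant and cannot see it.
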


Consider the following three functions,
$\phi(s,x)=P_{s}(P_{t-s} f)^{2}(x),$
$\varphi(s,x)=P_{s}\left(\Gamma(P_{t-s}f)\right)(x),$
$\psi(s,x)=P_{t}(\Gamma(\sqrt{P_{t-s} f}))(x).$
\begin{lemma}
For any $0\leq s < t$, $x\in V$, the following assertions are true,
$$\partial_{s}\phi(s,x)=2P_{s}\left(\Gamma(P_{t-s} f)\right)(x),$$
$$\partial_{s}\varphi(s,x)=2P_{s}(\Gamma_{2}(P_{t-s}f))(x),$$
$$\partial_{s}\psi(s,x)=2 P_{s}(\widetilde{\Gamma}_{2}(\sqrt{P_{t-s} f}))(x).$$
\end{lemma}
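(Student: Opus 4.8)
The plan is to treat the three identities uniformly. Each function has the shape $P_s(h_s)(x)$, where $h_s$ is a one-parameter family built from $g := P_{t-s}f$, and I would differentiate using the product rule together with two facts: $\partial_s P_s = \Delta P_s = P_s\Delta$ (the semigroup property $P_s=e^{s\Delta}$ from Proposition 3.2 combined with the commutation in Proposition 3.1), and $\partial_s g = \partial_s P_{t-s}f = -\Delta P_{t-s}f = -\Delta g$ (the heat equation, Remark 1(5)). These combine into a single reduction:
\[
\partial_s P_s(h_s)(x) = P_s(\Delta h_s)(x) + P_s(\partial_s h_s)(x) = P_s\big(\Delta h_s + \partial_s h_s\big)(x),
\]
so the entire problem becomes identifying $\Delta h_s + \partial_s h_s$ with the desired integrand through the algebraic definitions of $\Gamma$, $\Gamma_2$ and $\widetilde{\Gamma}_2$.

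For $\phi$ I would take $h_s = g^2$, so $\partial_s h_s = 2g\,\partial_s g = -2g\Delta g$ and hence $\Delta h_s + \partial_s h_s = \Delta(g^2) - 2g\Delta g = 2\Gamma(g)$ directly from the definition of the gradient form. For $\varphi$ I take $h_s = \Gamma(g)$ and use bilinearity of $\Gamma$, giving $\partial_s h_s = 2\Gamma(g,\partial_s g) = -2\Gamma(g,\Delta g)$, so that $\Delta h_s + \partial_s h_s = \Delta\Gamma(g) - 2\Gamma(g,\Delta g) = 2\Gamma_2(g)$ by the definition of $\Gamma_2$. Both of these are one-line computations once the reduction above is available.

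The third identity (read with the outer operator $P_s$, as the claimed derivative forces) is the substantive one. Here $h_s = \Gamma(u)$ with $u = \sqrt{g}$, so by symmetry $\partial_s h_s = 2\Gamma(u,\partial_s u)$ and I first compute $\partial_s u$. Writing $g = u^2$ and invoking the identity $\Delta(u^2) = 2u\Delta u + 2\Gamma(u)$ (a restatement of the $\Gamma$ definition), I obtain
\[
\partial_s u = \frac{\partial_s g}{2\sqrt{g}} = -\frac{\Delta(u^2)}{2u} = -\Delta u - \frac{\Gamma(u)}{u}.
\]
Substituting gives $\Delta h_s + \partial_s h_s = \Delta\Gamma(u) - 2\Gamma(u,\Delta u) - 2\Gamma\!\left(u,\tfrac{\Gamma(u)}{u}\right) = 2\Gamma_2(u) - 2\Gamma\!\left(u,\tfrac{\Gamma(u)}{u}\right) = 2\widetilde{\Gamma}_2(u)$, where the last step is just the definition $\widetilde{\Gamma}_2(u) = \Gamma_2(u) - \Gamma(u,\Gamma(u)/u)$.

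The main obstacle is analytic rather than algebraic: one must justify moving $\partial_s$ inside the infinite sum defining $P_s$, that is, differentiating termwise both the kernel ($\partial_s p_s(x,y) = \Delta_x p_s(x,y)$, Remark 1(5)) and the argument $h_s$, so that $\partial_s P_s(h_s) = P_s(\Delta h_s) + P_s(\partial_s h_s)$ genuinely holds. This requires bounds uniform in $s$ on the relevant functions. For $\phi$ and $\varphi$ these are exactly the stated boundedness of $P_{t-s}f$ and $\Gamma(P_{t-s}f)$ with $s$-independent bounds. For $\psi$ the delicate term is $\Gamma(u)/u$, equivalently the factor $\Delta u/(2\sqrt{u})$ entering through $\partial_s u$; its boundedness is precisely what the quoted Lemma from [HLLY] provides, which is why that lemma is recorded just beforehand. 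With these bounds a dominated-convergence argument legitimizes the interchange, and the algebraic identities above then finish the proof.
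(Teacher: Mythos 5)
Your proposal is correct and follows essentially the same route as the paper: both reduce everything to the identity $\partial_s P_s(h_s)=P_s(\Delta h_s+\partial_s h_s)$ (which the paper extracts from the explicit kernel computation for $\phi$ and then reuses for $\varphi$ and $\psi$), and both identify the resulting integrands with $2\Gamma$, $2\Gamma_2$ and $2\widetilde{\Gamma}_2$ via the same algebra, with the interchange of sum and derivative justified through the exhaustion/boundedness considerations and the boundedness of $\Delta u/(2\sqrt{u})$ from the quoted lemma. Your reading of $\psi$ with outer operator $P_s$ rather than the paper's (typographical) $P_t$, and your explicit unpacking of $\widetilde{\Gamma}_2(u)=\Gamma_2(u)-\Gamma\left(u,\Gamma(u)/u\right)$, are both consistent with what the paper implicitly uses.
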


\begin{proof}
For any $0\leq s < t$, $x\in V$,
\[
\begin{split}
\partial_{s}\phi(x,s)
&=\partial_{s}\sum_{y\in V}\mu(y)p_{s}(x,y)(P_{t-s}f)^{2}(y)\\
&=\lim_{k\rightarrow \infty}\partial_{s}\sum_{y\in V}\mu(y)p_{k}(s,x,y)(P_{t-s}f)^{2}(y)\\
&=\lim_{k\rightarrow \infty}\sum_{y\in V}\mu(y)\partial_{s}\left(p_{k}(s,x,y)(P_{t-s}f)^{2}(y)\right)\\
&=\lim_{k\rightarrow \infty}\sum_{y\in V}\mu(y)p_{k}(s,x,y)\left(\Delta(P_{t-s}f)^{2}(y)-2P_{t-s}f\Delta P_{t-s}f(y)\right)\\
&=P_{s}\left(\Delta(P_{t-s} f)^{2}-2\Delta(P_{t-s} f)P_{t-s} f\right)(x)\\
&=2P_{s}(\Gamma(P_{t-s} f))(x).
\end{split}
\]
For any $t>0$, and any positive function $f\in \ell^{\infty}(V,\mu)$, the interchange of limitation and deviation in the second step of the above calculation is for the convergence of summation is uniform with respect of $s$ on compact subsets of $(0,\infty)$. The interchange of summation and deviation in the third step of the above calculation is because $p_{k}(s,x,y)$ is non-zero only for finitely many items. And in the forth equality,
\[
\begin{split}
&\sum_{y\in V}\mu(y)\partial_{s}\left(p_{k}(s,x,y)(P_{t-s}f)^{2}(y)\right)\\
&=\sum_{y\in V}\mu(y)\Delta p_{k}(s,x,y)(P_{t-s}f)^{2}(y)-\sum_{y\in V}\mu(y) 2p_{k}(s,x,y)P_{t-s}f(y)\Delta P_{t-s}f(y)\\
&=\sum_{y\in V}\mu(y) p_{k}(s,x,y)\Delta(P_{t-s}f)^{2}(y)-\sum_{y\in V}\mu(y) 2p_{k}(s,x,y)P_{t-s}f(y)\Delta P_{t-s}f(y)\\
&=\sum_{y\in V}\mu(y) p_{k}(s,x,y)\left(\Delta(P_{t-s}f)^{2}(y)- 2P_{t-s}f(y)\Delta P_{t-s}f(y)\right),
\end{split}
\]

We can summarise from the above proof, for any positive and bounded functional $f_s\in \ell^{\infty}(V,\mu)$, we have
$$\partial_{s}P_{s}(f_{s})(x)=P_{s}(\Delta f_{s}+\partial_{s} f_{s})(x).$$

From that, we can simply obtain the following results,
\[
\begin{split}
\partial_{s}\varphi(s,x)
& =\partial_{s}P_{s}\left(\Gamma(P_{t-s}f)\right)(x)\\
& =P_{s}\left(\Delta\Gamma(P_{t-s}f)+ \partial_{s}\Gamma(P_{t-s}f\right)(x)\\
& =P_{s}\left(\Delta\Gamma(P_{t-s}f)- 2\Gamma(P_{t-s}f,\Delta P_{t-s}f\right)(x)\\
& =P_{s}\left(\Gamma_{2}(P_{t-s}f)\right)(x),
\end{split}
\]
where,
\[
\begin{split}
\partial_{s}\Gamma(P_{t-s} f)(x)
&= \frac{1}{2}\partial_{s}\widetilde{\sum_{y\sim x}}\left(P_{t-s} f(y)-P_{t-s} f(x)\right)^{2}\\
&= \widetilde{\sum_{y\sim x}}(P_{t-s} f(y)-P_{t-s} f(x)) (\partial_{s}P_{t-s} f(y)-\partial_{s}P_{t-s} f(x))\\
&= 2 \Gamma(P_{t-s} f,\partial_{s}P_{t-s} f)(x)\\
&= -2 \Gamma(P_{t-s} f,\Delta P_{t-s} f)(x).
\end{split}
\]

Furthermore,
\[
\begin{split}
\partial_{s}\psi(s,x)
& =\partial_{s}P_{s}(\Gamma(\sqrt{P_{t-s} f}))(x)\\
& =P_{s}\left( \Delta \Gamma(\sqrt{P_{t-s} f}) + \partial_{s}\Gamma(\sqrt{P_{t-s} f})\right)(x)\\
& =P_{s}\left( \Delta \Gamma(\sqrt{P_{t-s} f}) - 2\Gamma(\sqrt{P_{t-s} f},\frac{\Delta P_{t-s} f}{2\sqrt{P_{t-s} f}})\right)(x)\\
& =2 P_{s}(\widetilde{\Gamma}_{2}(\sqrt{P_{t-s} f}))(x).
\end{split}
\]
That ends the proof.
\end{proof}
\subsection{The equivalent properties of $CD$ condition}
In this subsection, we introduce one of the main results in this paper, including the gradient estimate, Poincar\'{e} inequalities and reverse
Poincar\'{e} inequalities at the assumption of  $CD$ inequality. The proof is close to Feng-Yu Wang from [W]. Recently, the following gradient estimate of (1) for finite graphs by Liu-
Peyerimhoff [LP], and for unbounded Laplace operator by Hua-Lin [HL], has been proved for $n=\infty$.

\begin{theorem}
For any $K \in \mathbb{R}$, $t\geq 0$, and any positive and bounded function $0<f\in \ell^{\infty}(V,\mu)$, the condition $CD(n,-K)$ is equivalent to each of the following statements:
\begin{enumerate}
  \item $\Gamma(P_{t}f) \leq e^{2Kt}P_{t}(\Gamma(f))-\frac{2}{n}\int_{0}^{t}e^{2Ks}P_{s}\left(P_{t-s} \Delta f\right)^{2} ds.$ \label{equ:1}
  \item $\Gamma(P_{t}f) \leq e^{2Kt}P_{t}(\Gamma(f))-\frac{e^{2Kt}-1}{Kn}\left(P_{t} \Delta f\right)^{2}.$ \label{equ:2}
  \item $P_{t}f^{2}-(P_{t}f)^{2}\leq \frac{e^{2Kt}-1}{K}P_{t}(\Gamma(f))-\frac{e^{2Kt}-1-2Kt}{K^{2}n}\left(P_{t} \Delta f\right)^{2}.$ \label{equ:3}
  \item $P_{t}f^{2}-(P_{t}f)^{2}\geq \frac{1-e^{2Kt}}{K}\Gamma(P_{t}f)+\frac{e^{-2Kt}-1+2Kt}{K^{2}n}\left(P_{t} \Delta f\right)^{2}.$ \label{equ:4}
\end{enumerate}
\end{theorem}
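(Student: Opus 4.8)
The plan is to obtain every forward implication $CD(n,-K)\Rightarrow(i)$ from a single interpolation device built on the functions $\phi,\varphi$ of the preceding Lemma, and to recover $CD(n,-K)$ from each statement by a Taylor expansion at $t=0$. Three facts will be used repeatedly: the pointwise identity $2\Gamma_2(f)=\Delta\Gamma(f)-2\Gamma(f,\Delta f)$ (the definition of $\Gamma_2$), the commutation $\Delta P_t=P_t\Delta$ from Proposition 3.1, and the sub-Markov Cauchy--Schwarz inequality
\[
(P_sg)^2(x)=\Big(\sum_{y\in V}\mu(y)p_s(x,y)g(y)\Big)^2\le\Big(\sum_{y\in V}\mu(y)p_s(x,y)\Big)P_s(g^2)(x)\le P_s(g^2)(x),
\]
valid because $\sum_y\mu(y)p_s(x,y)\le1$ (item 3 of Remark 1). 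Choosing $g=P_{t-s}\Delta f$ and using $\Delta P_{t-s}=P_{t-s}\Delta$ yields the master bound $P_s\big((P_{t-s}\Delta f)^2\big)\ge(P_t\Delta f)^2$, which is what turns the integral in (1) into the closed-form terms of (2), (3) and (4).

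For $CD\Rightarrow(1)$ I would fix $t$ and set $F(s)=e^{2Ks}\varphi(s,x)=e^{2Ks}P_s\big(\Gamma(P_{t-s}f)\big)(x)$ on $[0,t]$. Using $\partial_s\varphi=2P_s(\Gamma_2(P_{t-s}f))$ from the Lemma gives $F'(s)=2e^{2Ks}P_s\big(\Gamma_2(P_{t-s}f)+K\Gamma(P_{t-s}f)\big)(x)$, and $CD(n,-K)$ applied to $P_{t-s}f$ bounds the bracket below by $\tfrac1n(P_{t-s}\Delta f)^2$, so $F'(s)\ge\tfrac2n e^{2Ks}P_s((P_{t-s}\Delta f)^2)(x)$. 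Integrating on $[0,t]$ and using $F(0)=\Gamma(P_tf)$, $F(t)=e^{2Kt}P_t(\Gamma f)$ gives (1) verbatim. Statement (2) then follows from (1) because, by the master bound, the subtracted integral satisfies $\tfrac2n\int_0^t e^{2Ks}P_s((P_{t-s}\Delta f)^2)\,ds\ge\tfrac2n(P_t\Delta f)^2\int_0^t e^{2Ks}\,ds=\tfrac{e^{2Kt}-1}{Kn}(P_t\Delta f)^2$; replacing the integral by this smaller quantity only enlarges the right-hand side.

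For the two variance inequalities I would use $P_tf^2-(P_tf)^2=\phi(t,x)-\phi(0,x)=2\int_0^t\varphi(s,x)\,ds$, immediate from $\partial_s\phi=2\varphi$. The Poincar\'{e} inequality (3) comes from an upper bound on $\varphi$: applying the already established estimate (2) to $f$ at time $t-s$ controls $\Gamma(P_{t-s}f)$ pointwise, and applying the positivity-preserving operator $P_s$ together with the master bound gives $\varphi(s,x)\le e^{2K(t-s)}P_t(\Gamma f)(x)-\tfrac{e^{2K(t-s)}-1}{Kn}(P_t\Delta f)^2$ (the coefficient $\tfrac{e^{2K(t-s)}-1}{K}$ is positive for every real $K$); integrating in $s$ and evaluating the elementary exponential integrals yields (3). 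The reverse Poincar\'{e} inequality (4) comes from a lower bound on $\varphi$: $CD$ gives $\partial_s\varphi+2K\varphi\ge\tfrac2n(P_t\Delta f)^2$, hence $(e^{2Ks}\varphi)'\ge\tfrac2n e^{2Ks}(P_t\Delta f)^2$; integrating from $0$ with $\varphi(0,x)=\Gamma(P_tf)$ gives $\varphi(s,x)\ge e^{-2Ks}\Gamma(P_tf)+\tfrac{1-e^{-2Ks}}{Kn}(P_t\Delta f)^2$, and a final integration in $s$ produces (4) (the computation returns the coefficient $\tfrac{1-e^{-2Kt}}{K}$ on the $\Gamma(P_tf)$ term).

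For the converses I would expand each statement around $t=0$, where both sides coincide. For the gradient estimates (1) and (2) the first derivatives in $t$ already differ: since the nonnegative difference vanishes at $t=0$, its first derivative at $t=0$ is nonnegative, and using $\tfrac{d}{dt}\Gamma(P_tf)|_0=2\Gamma(f,\Delta f)$, $\tfrac{d}{dt}P_t(\Gamma f)|_0=\Delta\Gamma(f)$, the Leibniz rule for the integral in (1), and the vanishing of the quadratic coefficients at $t=0$, this collapses through $2\Gamma_2(f)=\Delta\Gamma(f)-2\Gamma(f,\Delta f)$ to $\Gamma_2(f)\ge\tfrac1n(\Delta f)^2-K\Gamma(f)$. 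For the variance inequalities (3) and (4) both sides agree to first order as well (both equal $2t\,\Gamma(f)+O(t^2)$), so I must go to second order: the sign of the $t^2$-coefficient of the nonnegative difference is forced, and after simplifying $\tfrac12\Delta^2(f^2)$ and the cross terms with the same $\Gamma_2$ identity, that coefficient is exactly $2\Gamma_2(f)+2K\Gamma(f)-\tfrac2n(\Delta f)^2\ge0$, i.e. $CD(n,-K)$. The main obstacle I anticipate is analytic rather than algebraic: justifying differentiation under $P_s$ and the interchange of $\partial_s$ with the infinite sums defining the semigroup (precisely what the preceding Lemma and Proposition 3.1 supply, and which I would cite), together with the careful bookkeeping of the $0/0$ exponential coefficients at $t=0$ needed to make the second-order expansion for (3) and (4) rigorous.
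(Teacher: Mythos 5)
Your proposal is correct and follows essentially the same route as the paper: the interpolation functional $e^{2Ks}P_s(\Gamma(P_{t-s}f))$ for (1), the semigroup Cauchy--Schwarz bound $P_s\bigl((P_{t-s}\Delta f)^2\bigr)\ge (P_t\Delta f)^2$ to pass to (2), integration of $\partial_s\phi=2\varphi$ with the upper and lower bounds on $\varphi$ for (3) and (4), and Taylor expansion of $P_t=e^{t\Delta}$ at $t=0$ combined with $2\Gamma_2(f)=\Delta\Gamma(f)-2\Gamma(f,\Delta f)$ for the converses. The only cosmetic differences are that you derive the lower bound on $\varphi$ by integrating the differential inequality directly rather than by invoking (2) at time $s$ (the same computation), and you offer first-order converses from (1) and (2) in addition to the second-order ones from (3) and (4) that the paper uses to close the equivalence.
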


\begin{proof}
First, we prove the condition $CD(n,-K)$ implies the item \eqref{equ:1}. For $0\leq s\leq t$, consider this functional
$$e^{2Ks}\varphi(s)=e^{2Ks}P_{s}\left(\Gamma(P_{t-s}f)\right),$$
from Lemma 3.2, if the graph satisfies $CD(n,-K)$, then we have
\begin{equation}
\begin{split}
\partial_{s}e^{2Ks}\varphi(s)
& =2e^{2Ks}P_{s}\left(\Gamma_{2}(P_{t-s}f)+K\Gamma(P_{t-s}f)\right)\\
& \geq 2e^{2Ks}P_{s}\left(\frac{1}{n}(\Delta P_{t-s}f)^{2}\right)\\
& =\frac{2e^{2Ks}}{n}P_{s}(\Delta P_{t-s}f)^{2},
\end{split}
\notag
\end{equation}
integrating the above inequality from $0$ to $t$  with respect to $s$, the left side of the above inequality is equal to $e^{2Kt}P_{t}(\Gamma(f))-\Gamma(P_{t}f)$, from Proposition 2.1, we know the fact $\Delta P_{t}=P_{t}\Delta$, then we have
$$\Gamma(P_{t}f)\leq e^{2Kt}P_{t}(\Gamma(f))-\frac{2}{n}\int_{0}^{t}e^{2Ks}P_{s}(P_{t-s}\Delta f)^{2} ds.$$

Then, we introduce the proof of item \eqref{equ:1} implies \eqref{equ:2}. $P_{s}$ satisfies the Cauchy-Schwartz inequality, and from Proposition 2.1, we obtain
\begin{equation}
P_{s}(P_{t-s}\Delta f)^{2} \geq \left(P_{s}(P_{t-s}\Delta f)\right)^{2}= (P_{t}\Delta f)^{2},
\end{equation}\label{equ:5}
so,
\[
\begin{split}
\Gamma(P_{t}f)
&\leq e^{2Kt}P_{t}(\Gamma(f))-\frac{2}{n}(P_{t}\Delta f)^{2}\int_{0}^{t}e^{2Ks} ds\\
&=e^{2Kt}P_{t}(\Gamma(f))-\frac{e^{2Kt}-1}{Kn}\left(P_{t} \Delta f\right)^{2}.
\end{split}
\]

Furthermore, we prove \eqref{equ:2} implies \eqref{equ:3} and \eqref{equ:4}.
From Lemma 3.2, we have
$$\partial_{s}\phi(s)=2P_{s}\left(\Gamma(P_{t-s} f)\right),$$
integrating the above equality from $0$ to $t$, the left side of the above inequality is equal to,
$$\phi(t)-\phi(0)=P_{t}f^{2}-(P_{t}f)^{2}.$$
the right side is not that trivial, from the properties of $P_{t}$ in Proposition 2.1, item \eqref{equ:2} and the inequality \eqref{equ:5}, we have
\begin{equation}
\begin{split}
2P_{s}\left(\Gamma(P_{t-s} f)\right)
& \leq 2P_{s}\left(e^{2K(t-s)}P_{t-s}(\Gamma(f))-\frac{e^{2K(t-s)}-1}{Kn}\left(P_{t-s} \Delta f\right)^{2} \right)\\
& =2 e^{2K(t-s)}P_{t}(\Gamma(f))-2\frac{e^{2K(t-s)}-1}{Kn}P_{s}\left(P_{t-s} \Delta f\right)^{2} \\
& \leq 2 e^{2K(t-s)}P_{t}(\Gamma(f))-2\frac{e^{2K(t-s)}-1}{Kn}\left(P_{t} \Delta f\right)^{2},
\end{split}
\notag
\end{equation}
integrating this equality from $0$ to $t$, we obtain
$$P_{t}f^{2}-(P_{t}f)^{2}\leq \frac{e^{2Kt}-1}{K}P_{t}(\Gamma(f))-\frac{e^{2Kt}-1-2Kt}{K^{2}n}\left(P_{t} \Delta f\right)^{2},$$
that implies that \eqref{equ:3} is true.

On the other side,
\begin{equation}
\begin{split}
2P_{s}\left(\Gamma(P_{t-s} f)\right)
& \geq \frac{2}{e^{2Ks}}\Gamma(P_{t}f)+\frac{2}{e^{2Ks}}\frac{e^{2Ks}-1}{Kn}\left(P_{s} \Delta P_{t-s}f\right)^{2}\\
& =2e^{-2Ks}\Gamma(P_{t}f)+2\frac{1-e^{-2Ks}}{Kn}(P_{t} \Delta f)^{2},
\end{split}
\notag
\end{equation}
integrating this equality from $0$ to $t$, we obtain
$$P_{t}f^{2}-(P_{t}f)^{2}\geq \frac{1-e^{2Kt}}{K}\Gamma(P_{t}f)+\frac{e^{-2Kt}-1+2Kt}{K^{2}n}\left(P_{t} \Delta f\right)^{2},$$
then item \eqref{equ:4} is true too.

Finally, we give the proof that \eqref{equ:3} or \eqref{equ:4} imply the condition $CD(n,-K)$.
Form Proposition 2.2, we have
$$P_{t}=e^{t\Delta}=\sum_{k=0}^{\infty}\frac{t^{k}\Delta^{k}}{k!}.$$
From that, we obtain
$$P_{t}f^{2}=f^{2}+t\Delta f^{2}+\frac{t^{2}}{2}\Delta^{2} f^{2}+o(t^{2}),$$
$$(P_{t}f)^{2}=\left(f+t\Delta f+\frac{t^{2}}{2}\Delta^{2} f+o(t^{2})\right)^{2}=f^{2}+t^{2}(\Delta f)^{2}++2tf\Delta f+t^{2}f\Delta^{2} f +o(t^{2}),$$
so,
\begin{equation*}
\begin{split}
P_{t}f^{2}-(P_{t}f)^{2}
& =2t\Gamma(f)+t^{2}\left(\frac{1}{2}\Delta^{2} f^{2}-(\Delta f)^{2}-f\Delta^{2} f\right)+o(t^{2})\\
& =2t\Gamma(f)+t^{2}\left[\left(\frac{1}{2}\Delta^{2} f^{2}-\Delta(f\Delta f)\right)+\left(\Delta(f\Delta f)-(\Delta f)^{2}-f\Delta^{2} f\right)\right]+o(t^{2})\\
& =2t\Gamma(f)+t^{2}\left(\Delta\Gamma(f)+2\Gamma(f,\Delta f)\right)+o(t^{2}).
\end{split}
\end{equation*}
On the other side,
\begin{equation*}
\begin{split}
\frac{e^{2Kt}-1}{K}P_{t}(\Gamma(f))
& =(2t+2Kt^{2}++o(t^{2}))\cdot (\Gamma(f)+t\Delta\Gamma(f)++o(t))\\
& =2t\Gamma(f)+2t^{2}(\Delta\Gamma(f)+K\Gamma(f))+o(t^{2}),
\end{split}
\end{equation*}
and,
\begin{equation*}
\frac{e^{2Kt}-1-2Kt}{K^{2}n}(P_{t}\Delta f)^{2}=\frac{2 t^{2}}{n}(\Delta f)^{2}+o(t^{2}).
\end{equation*}

From item (3) and the above three equalities, we can obtain,
$$2t\Gamma(f)+t^{2}\left(\Delta\Gamma(f)+2\Gamma(f,\Delta f)\right)-2t\Gamma(f)-2t^{2}(\Delta\Gamma(f)+K\Gamma(f))+\frac{2}{n}t^{2}(\Delta f)^{2}+o(t^{2})\leq 0,$$
from some simple computation, we have
$$t^{2}\left(-2\Gamma_{2}(f)-2K\Gamma(f)+2\frac{1}{n}(\Delta f)^{2}\right)+o(t^{2})\leq 0,$$
let the both side of the above inequality divide by $t^{2}$, and then $t\rightarrow 0$, then
we obtain $CD(n,-K)$ inequality.

Similarly, item (4) implies $CD(n,-K)$ inequality too. Due to $\Gamma$ is bilinear, we have
$$\Gamma(P_{t}f)=\Gamma(f+t\Delta f+o(t))=\Gamma(f)+t^{2}\Gamma(\Delta f)+2t\Gamma(f,\Delta f)+o(t^{2}),$$
substituting it to the right side of the item (4),
\begin{equation}
\begin{split}
& \frac{1-e^{-2Kt}}{K}\Gamma(P_{t}f)+\frac{e^{-2Kt}-1+2Kt}{K^{2}n}\left(P_{t}\Delta f\right)^{2}\\
& =(2t-2Kt^{2}+o(t^{2}))\left(\Gamma(f)+t^{2}\Gamma(\Delta f)+2t\Gamma(f,\Delta f)+o(t^{2})\right)+\frac{2}{n}t^{2}(\Delta f)^{2}+o(t^{2})\\
& =2t\Gamma(f)+2t^{2}\left(2\Gamma(f,\Delta f)-K\Gamma(f)+\frac{1}{n}(\Delta f)^{2}\right)+o(t^{2}).
\end{split}
\notag
\end{equation}
combining with \eqref{equ:4}, with simply calculates, we will obtain the condition $CD(n,-K)$.
\end{proof}

Let $K\rightarrow 0$ or $n\rightarrow \infty$ in Theorem 3.1, we can obtain the following two corollaries.
\begin{corollary}
For any positive and bounded function $0<f\in \ell^{\infty}(V,\mu)$, the condition $CD(n,0)$ is equivalent to:
\begin{enumerate}
  \item $\Gamma(P_{t}f) \leq P_{t}(\Gamma(f))-\frac{2}{n}\int_{0}^{t}P_{s}\left(P_{t-s} \Delta f\right)^{2} ds.$
  \item $\Gamma(P_{t}f) \leq P_{t}(\Gamma(f))-\frac{2t}{n}\left(P_{t} \Delta f\right)^{2}.$
  \item $-2t\Gamma(P_{t}f)+\frac{2t^2}{n}\left(P_{t} \Delta f\right)^{2}\leq P_{t}f^{2}-(P_{t}f)^{2}\leq 2tP_{t}(\Gamma(f))-\frac{2t^2}{n}\left(P_{t} \Delta f\right)^{2}.$
\end{enumerate}
\end{corollary}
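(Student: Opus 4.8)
The plan is to obtain Corollary 3.2 directly as the $K\to 0$ specialization of Theorem 3.1, rather than repeating the whole argument. The condition $CD(n,0)$ is precisely $CD(n,-K)$ with $K=0$, so formally one sets $K=0$ in each of the four equivalent statements. The only point requiring attention is that the coefficients in items \eqref{equ:2}, \eqref{equ:3} and \eqref{equ:4} carry $K$ in the denominator; however, each such coefficient is a removable singularity at $K=0$ with a finite limit, so the substitution is legitimate. Note also that the two one-sided estimates \eqref{equ:3} and \eqref{equ:4} will merge, in the limit, into the single two-sided item (3) of the corollary, so the four items of Theorem 3.1 collapse to three.

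First I would record the relevant limits by Taylor expanding $e^{2Kt}=1+2Kt+2K^{2}t^{2}+o(K^{2})$ and $e^{-2Kt}=1-2Kt+2K^{2}t^{2}+o(K^{2})$, which give
\begin{gather*}
\frac{e^{2Kt}-1}{Kn}\xrightarrow[K\to 0]{}\frac{2t}{n},\qquad
\frac{e^{2Kt}-1}{K}\xrightarrow[K\to 0]{}2t,\qquad
\frac{e^{2Kt}-1-2Kt}{K^{2}n}\xrightarrow[K\to 0]{}\frac{2t^{2}}{n},\\
\frac{1-e^{2Kt}}{K}\xrightarrow[K\to 0]{}-2t,\qquad
\frac{e^{-2Kt}-1+2Kt}{K^{2}n}\xrightarrow[K\to 0]{}\frac{2t^{2}}{n},
\end{gather*}
together with $e^{2Kt}\to 1$ and $e^{2Ks}\to 1$ uniformly for $s\in[0,t]$. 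Substituting these into items \eqref{equ:1} and \eqref{equ:2} yields items (1) and (2) of the corollary, while items \eqref{equ:3} and \eqref{equ:4} become, respectively, the upper and the lower bound of item (3).

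Since a bare passage to the limit need not preserve an equivalence, I would justify the corollary by observing that the proof of Theorem 3.1 remains valid verbatim after substituting $K=0$, with every $K$-dependent coefficient replaced by its limit above. In the forward direction the key inequality $\partial_{s}\varphi(s)=2P_{s}(\Gamma_{2}(P_{t-s}f))\geq\frac{2}{n}P_{s}(\Delta P_{t-s}f)^{2}$ is exactly the statement $CD(n,0)$, and the ensuing integrations produce the limiting coefficients directly (e.g.\ $\int_{0}^{t}1\,ds=t$ replaces $\int_{0}^{t}e^{2Ks}\,ds$). For the reverse direction, the short-time Taylor expansion used at the end of the theorem still isolates the coefficient of $t^{2}$, and comparing it gives $-2\Gamma_{2}(f)+\frac{2}{n}(\Delta f)^{2}\leq 0$, i.e.\ $CD(n,0)$.

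The main obstacle I anticipate is not computational but one of justification: confirming that the reverse implication stays sharp in the limit, so that items (1)--(3) recover exactly $\Gamma_{2}(f)\geq\frac{1}{n}(\Delta f)^{2}$ and nothing weaker. This is settled by the same $o(t^{2})$ expansion as in Theorem 3.1, since the $t^{2}$-coefficient comparison isolates precisely $\Gamma_{2}(f)-\frac{1}{n}(\Delta f)^{2}$; the linear-in-$t$ terms cancel on both sides, which is exactly what makes the $K=0$ case behave identically to the general one.
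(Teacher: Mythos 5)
Your proposal is correct and matches the paper's approach exactly: the paper derives this corollary simply by letting $K\to 0$ in Theorem 3.1, with the coefficients tending to the limits you computed and items \eqref{equ:3} and \eqref{equ:4} merging into the two-sided bound. Your extra observation that the equivalence (not just the forward implications) survives the limit because the proof of Theorem 3.1 runs verbatim at $K=0$ is a point the paper leaves implicit, and it is handled correctly.
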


\begin{corollary}
For any positive and bounded function $0<f\in \ell^{\infty}(V,\mu)$, the condition $CD(\infty,-K)$ is equivalent to:
\begin{enumerate}
  \item $\Gamma(P_{t}f) \leq e^{2Kt}P_{t}(\Gamma(f)).$
  \item $\frac{1-e^{2Kt}}{K}\Gamma(P_{t}f)\leq P_{t}f^{2}-(P_{t}f)^{2}\leq \frac{e^{2Kt}-1}{K}P_{t}(\Gamma(f)).$
\end{enumerate}
\end{corollary}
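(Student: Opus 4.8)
The plan is to read off Corollary 3.2 as the $n\to\infty$ specialization of Theorem 3.1: in that limit every summand carrying a factor $1/n$ (or $1/(K^2n)$) disappears, items (1) and (2) of the theorem collapse to the single gradient bound (1), and items (3)–(4) collapse to the two-sided estimate (2). Since the assertion is an equivalence and a limit of equivalences need not itself be one, I would not pass to the limit in the conclusion but instead re-run the arguments of Theorem 3.1 under the weaker hypothesis, proving the cycle $CD(\infty,-K)\Rightarrow(1)\Rightarrow(2)\Rightarrow CD(\infty,-K)$; this makes $CD(\infty,-K)$, (1) and (2) mutually equivalent. Here $CD(\infty,-K)$ is by definition the inequality $\Gamma_2(f)\geq -K\Gamma(f)$, obtained by dropping the term $\frac1n(\Delta f)^2$.

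For $CD(\infty,-K)\Rightarrow(1)$ I follow the interpolation step of Theorem 3.1 with $\varphi(s,x)=P_s(\Gamma(P_{t-s}f))(x)$. Lemma 3.2 gives
$$\partial_s\bigl(e^{2Ks}\varphi(s)\bigr)=2e^{2Ks}P_s\bigl(\Gamma_2(P_{t-s}f)+K\Gamma(P_{t-s}f)\bigr),$$
and now $CD(\infty,-K)$ forces the bracket to be nonnegative, so $e^{2Ks}\varphi(s)$ is nondecreasing on $[0,t]$. Comparing $s=t$ with $s=0$ yields $\Gamma(P_tf)\leq e^{2Kt}P_t(\Gamma(f))$, which is (1); unlike the finite-$n$ case, no Cauchy--Schwarz step enters.

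For $(1)\Rightarrow(2)$ I integrate $\partial_s\phi(s)=2P_s(\Gamma(P_{t-s}f))$ from $0$ to $t$, so the left side becomes $\phi(t)-\phi(0)=P_tf^2-(P_tf)^2$. Applying (1) with $g=f$ and time $t-s$ and then $P_s$, together with the semigroup law $P_sP_{t-s}=P_t$ from Proposition 2.1, bounds the integrand above by $2e^{2K(t-s)}P_t(\Gamma(f))$; applying (1) with $g=P_{t-s}f$ and time $s$ bounds it below by $2e^{-2Ks}\Gamma(P_tf)$. Integrating these two estimates in $s$ produces the two-sided bound (2). Finally, for $(2)\Rightarrow CD(\infty,-K)$ I reuse the short-time expansion of Theorem 3.1: by Proposition 2.2, $P_t=e^{t\Delta}=\sum_k t^k\Delta^k/k!$, so that
$$P_tf^2-(P_tf)^2=2t\Gamma(f)+t^2\bigl(\Delta\Gamma(f)+2\Gamma(f,\Delta f)\bigr)+o(t^2),$$
while both bounding expressions in (2) expand as $2t\Gamma(f)+O(t^2)$ with the same leading term. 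Matching the $O(t)$ terms, then comparing the coefficients of $t^2$ in each of the two inequalities and letting $t\to0$, yields $\Gamma_2(f)\geq -K\Gamma(f)$ via the identity $2\Gamma_2=\Delta\Gamma-2\Gamma(\cdot,\Delta\cdot)$.

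The only points needing genuine care lie in this last implication. First, the term-by-term differentiation of the series $e^{t\Delta}$ and the control of the $o(t^2)$ remainders must be justified; this is legitimate because $D_\mu<\infty$ makes $\Delta$ a bounded operator on $\ell^\infty(V,\mu)$, so the power series converges in operator norm and may be expanded freely. Second, the hypotheses quantify only over positive bounded $f$ whereas $CD(\infty,-K)$ quantifies over all $f$; this gap is closed by observing that $\Gamma(f)(x)$ and $\Gamma_2(f)(x)$ depend only on the values of $f$ on the finite ball $B_2(x)$ and are unchanged by adding a constant, so a general $f$ may be replaced near $x$ by a positive bounded function without affecting either side of the inequality at $x$.
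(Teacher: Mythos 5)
Your proposal is correct and follows the route the paper intends: the paper disposes of this corollary with the single remark that one lets $n\to\infty$ in Theorem 3.1, and your argument is precisely the proof of that theorem with every $\frac{1}{n}$ term deleted. Your decision to re-run the cycle $CD(\infty,-K)\Rightarrow(1)\Rightarrow(2)\Rightarrow CD(\infty,-K)$ rather than pass to the limit in the four-way equivalence is the right instinct, and your closing remarks (boundedness of $\Delta$ from $D_\mu<\infty$ justifying the series expansion, and the locality-plus-constant-shift argument extending the inequality from positive bounded $f$ to general $f$) address gaps the paper leaves open even in Theorem 3.1 itself. One point to reconcile: your integration of the lower bound $2e^{-2Ks}\Gamma(P_tf)$ over $[0,t]$ yields the constant $\frac{1-e^{-2Kt}}{K}$, not the $\frac{1-e^{2Kt}}{K}$ printed in item (2); the printed constant appears to be a typo inherited from Theorem 3.1(4), where the same integral occurs, and with the printed constant the two sides of the lower bound already disagree at order $t$ (it reads $-2t\Gamma(f)+O(t^2)$ against a left side of $2t\Gamma(f)+O(t^2)$), so no $\Gamma_2$ information could be extracted from it. Your (correct) constant is the one that should be stated; and since, as you note, the upper bound in (2) alone recovers $CD(\infty,-K)$ via the short-time expansion, the equivalence survives in either reading.
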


\subsection{The equivalent properties of $CDE'$ condition}

In this subsection, we let the dimension of graph be $\infty$, and prove the gradient  of the graph.
\begin{theorem}
For any $K\in \mathbb{R}$, and any positive and bounded function $0<f\in \ell^{\infty}(V,\mu)$, the condition $CDE'(\infty,-K)$ is equivalent to:
$$\Gamma(\sqrt{P_{t}f}) \leq e^{2Kt}P_{t}(\Gamma(\sqrt{f})).$$
\end{theorem}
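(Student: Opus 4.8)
The plan is to follow the semigroup-interpolation scheme already used for Theorem 3.1 and Corollary 3.2, but now built on the third function of Lemma 3.2, namely $\psi(s,x)=P_s(\Gamma(\sqrt{P_{t-s}f}))(x)$, whose $s$-derivative is $2P_s(\Gammatt(\sqrt{P_{t-s}f}))(x)$. The essential simplification coming from $n=\infty$ is that the dimension term in the $CDE'$ inequality drops out, so $CDE'(\infty,-K)$ reads simply $\Gammatt(g)\geq -K\Gamma(g)$ for every positive $g$; this is exactly the $\Gammatt$-analogue of the bound $\Gamma_2(g)\geq -K\Gamma(g)$ that drove Corollary 3.2.

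For the forward implication, assuming $CDE'(\infty,-K)$, I would study the monotonicity of $e^{2Ks}\psi(s,x)$ on $[0,t]$. Differentiating and using Lemma 3.2 gives
\[
\partial_s\left(e^{2Ks}\psi(s,x)\right)=2e^{2Ks}P_s\left(\Gammatt(\sqrt{P_{t-s}f})+K\Gamma(\sqrt{P_{t-s}f})\right)(x).
\]
Applying $CDE'(\infty,-K)$ to the positive function $\sqrt{P_{t-s}f}$ makes the bracket nonnegative, and since $P_s$ is positivity preserving (Remark 1, item 2) the whole derivative is $\geq 0$. Integrating from $0$ to $t$ and reading off the endpoint values $\psi(0,x)=\Gamma(\sqrt{P_tf})(x)$ and $\psi(t,x)=P_t(\Gamma(\sqrt f))(x)$ yields $\Gamma(\sqrt{P_tf})\leq e^{2Kt}P_t(\Gamma(\sqrt f))$, the desired gradient estimate. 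Here the boundedness supplied by Lemma 3.1 is what legitimizes the application of Lemma 3.2.

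For the converse, set $G(t,x)=e^{2Kt}P_t(\Gamma(\sqrt f))(x)-\Gamma(\sqrt{P_tf})(x)$. The gradient estimate says $G\geq 0$ for all $t\geq 0$, while $G(0,x)=0$, so $t\mapsto G(t,x)$ attains its minimum on $[0,\infty)$ at $t=0$ and therefore has nonnegative right derivative there. Computing this derivative at $t=0$, using $\partial_tP_tg=\Delta P_tg$, $P_0=\mathrm{Id}$, and the termwise differentiation $\partial_t\Gamma(\sqrt{P_tf})=2\Gamma(\sqrt{P_tf},\tfrac{\Delta P_tf}{2\sqrt{P_tf}})$, gives
\[
\partial_tG(t,x)\big|_{t=0}=\Delta\Gamma(\sqrt f)(x)+2K\Gamma(\sqrt f)(x)-2\Gamma\!\left(\sqrt f,\tfrac{\Delta f}{2\sqrt f}\right)(x).
\]
Since $2\Gammatt(\sqrt f)=\Delta\Gamma(\sqrt f)-2\Gamma(\sqrt f,\tfrac{\Delta f}{2\sqrt f})$ (the identity underlying the third line of the $\psi$-computation in Lemma 3.2, specialized to $g^2=f$), this right derivative equals $2\Gammatt(\sqrt f)(x)+2K\Gamma(\sqrt f)(x)$, and its nonnegativity gives $\Gammatt(\sqrt f)\geq -K\Gamma(\sqrt f)$ at every $x$. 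Finally, given an arbitrary positive bounded $g$, applying this to $f=g^2$ produces $\Gammatt(g)\geq -K\Gamma(g)$, i.e. $CDE'(\infty,-K)$.

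The step I expect to be the main obstacle is the justification of the pointwise $t$-differentiation of $\Gamma(\sqrt{P_tf})$ at $t=0$ in the converse: one must check that each summand $(\sqrt{P_tf(y)}-\sqrt{P_tf(x)})^2$ is differentiable at $t=0$, which uses $f(x)>0$ so that $\sqrt{P_tf}$ stays away from $0$ near $t=0$, and that, by local finiteness, the finite averaged sum over $y\sim x$ may be differentiated term by term. In the forward direction the analogous care is already absorbed into Lemma 3.1 and Lemma 3.2, so no new obstruction appears there.
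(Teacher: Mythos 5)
Your proposal is correct and follows essentially the same route as the paper: the forward direction via monotonicity of $e^{2Ks}P_s(\Gamma(\sqrt{P_{t-s}f}))$ using Lemma 3.2 and $CDE'(\infty,-K)$ applied to $\sqrt{P_{t-s}f}$, and the converse by taking the right derivative at $t=0$ of the nonnegative quantity $e^{2Kt}P_t(\Gamma(\sqrt f))-\Gamma(\sqrt{P_tf})$ to recover $\Gammatt(\sqrt f)\geq -K\Gamma(\sqrt f)$. Your closing remark about passing from $\sqrt f$ to an arbitrary positive $g$ via $f=g^2$ is a small clarification the paper leaves implicit, but the argument is the same.
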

\begin{proof}
For any $0\leq s\leq t$, consider the functional
$$e^{2Ks}\psi(s)=e^{2Ks}P_{s}(\Gamma(\sqrt{P_{t-s} f})),\quad 0\leq s \leq t,$$
from Lemma 3.1, we have
$$\partial_{s}e^{2Ks}\psi(s)=2e^{2Ks}P_{s}(\widetilde{\Gamma}_{2}(\sqrt{P_{t-s} f})+K\Gamma(\sqrt{P_{t-s} f})).$$
Applying the condition $CDE'(\infty,-K)$ to the function $\sqrt{P_{t-s} f}$, we obtain
$$\widetilde{\Gamma}_{2}(\sqrt{P_{t-s} f})\geq -K\Gamma(\sqrt{P_{t-s} f}),$$
then for any $x\in V$, $\partial_{t}\psi(t)\geq 0$ is true. that is, the functional $e^{2Ks}\psi(s)$ is not decreasing with respect of $t$, so, $\psi(0)\leq\psi(t)$, that is
$$\Gamma(\sqrt{P_{t}f}) \leq e^{-2Kt}P_{t}(\Gamma(\sqrt{f})).$$

On the other side, we need prove the assertion that $\Gamma(\sqrt{P_{t}f}) \leq e^{2Kt}P_{t}(\Gamma(\sqrt{f}))$ implies the condition $CDE'(\infty,-K)$, as follows.

when $t=0$, the proof is trivial;

when $t\neq 0$,
\[
\begin{split}
0&\leq\lim_{t\rightarrow 0^{+}}\frac{1}{2t}[e^{2Kt}P_{t}(\Gamma(\sqrt{f}))-\Gamma(\sqrt{P_{t}f})]\\
&=\lim_{t\rightarrow 0^{+}}\frac{1}{2}[2Ke^{2Kt}P_{t}(\Gamma(\sqrt{f}))+e^{2Kt}\Delta P_{t}(\Gamma(\sqrt{f}))-2\Gamma(\sqrt{P_{t}f},\partial_{t}\sqrt{P_{t}f})]\\
&=\lim_{t\rightarrow 0^{+}}\frac{1}{2}[2Ke^{2Kt}P_{t}(\Gamma(\sqrt{f}))+e^{2Kt}\Delta P_{t}(\Gamma(\sqrt{f}))-2\Gamma(\sqrt{f},\frac{\Delta P_{t}f}{2\sqrt{P_{t}f}})]\\
 &=\frac{1}{2}\Delta(\Gamma(f))+K\Gamma(\sqrt{f})-\Gamma(\sqrt{f},\frac{\Delta f}{2\sqrt{f}})\\
 &=\widetilde{\Gamma}_{2}(\sqrt{f})+K\Gamma(\sqrt{f}),
\end{split}
\]
that is the condition $CDE'(\infty,K)$ is true.

This ends the proof.
\end{proof}

Yong Lin,\\
Department of Mathematics, Renmin University of China, Beijing, China\\
\textsf{linyong01@ruc.edu.cn}\\
Shuang Liu,\\
Department of Mathematics, Renmin University of China, Beijing, China\\
\textsf{cherrybu@ruc.edu.cn}
\end{document}